%
\documentclass[runningheads]{llncs}
\usepackage{graphicx}
%

\usepackage{amsmath}
\usepackage{amsfonts}
\usepackage{amssymb}
\usepackage{algorithm}

\usepackage{setspace}
\usepackage[misc,geometry]{ifsym}
\usepackage{cite}

\usepackage{hyperref}
\usepackage{cleveref}
\usepackage{algpseudocode}

\newcommand\yes{\textsc{Yes}}
\newcommand\no{\textsc{No}}
\newcommand\np{\textsc{NP}}
\newcommand{\set}[1]{\ensuremath{ \left\lbrace #1 \right\rbrace }}

\newcommand{\wps}{\textsc{Weighted Positive \mbox{2-SAT}}}
\newcommand\contracindep{\textsc{Con\-trac\-tion Blocker($\alpha$)}}
\newcommand\dcontracindep{\textsc{$d$-Con\-trac\-tion Blocker($\alpha$)}}
\newcommand\contracclique{\textsc{1-Con\-trac\-tion Blocker($\omega$)}}
\newcommand\delindep{\textsc{1-Dele\-tion Blocker($\alpha$)}}

\newcommand{\mvc}{\textsc{Vertex Cover}}
\newcommand{\mis}{\textsc{Inde\-pendent Set}}
\newcommand{\dist}{\textrm{dist}}

\setlength{\tabcolsep}{4pt}

\usepackage{tikz}
\usetikzlibrary{arrows,automata}
\usetikzlibrary{decorations.pathreplacing}
\usepackage{tikz-layers}
\tikzstyle{vertex}=[thick,circle,inner sep=0.cm, minimum size=2mm, fill=white, draw=black]
\tikzstyle{hedge}=[thick]
 \tikzstyle{gedge}=[thick, draw = ForestGreen, opacity = 1]
 \tikzstyle{bcircle}=[thick, circle, minimum size = 15pt, draw]

\newcommand{\gadget}[2]{
	\def\radi{#2}
	\node[] at (0,0){$K_{#1}$};
	
	\draw[thick] (0,0) circle(#2);
	\tangent{0}{0}{\radi}{0}{2.4*\radi}
	\node[vertex, label = left:$v_{#1}$] (v) at (0, 2.4*\radi){};

}
\newcommand{\tangent}[5]{
	\def\r{#3};
	\def\cx{#1};
	\def\cy{#2};
	\pgfmathsetmacro\dx{#4-#1};
	\pgfmathsetmacro\dy{#5-#2};
	
	\pgfmathsetmacro{\lend}{\dx*\dx+\dy*\dy}
	\pgfmathsetmacro{\tmp}{{sqrt(\lend-\r*\r)}}
;
	
	\pgfmathsetmacro{\xa}{\cx + (\dx*\r-\dy*\tmp)*\r/\lend}
	\pgfmathsetmacro{\ya}{\cy + (\dy * \r + (\dx * \tmp))* \r / \lend}
	\pgfmathsetmacro\xb{\cx+ (\dx*\r+\dy*\tmp)*\r/\lend};
	\pgfmathsetmacro\yb{\cy+ (\dy*\r-\dx*\tmp)*\r/\lend};
	\draw[hedge](\xa,\ya) -- (#4,#5);
	\draw[hedge](\xb,\yb) -- (#4,#5);
	
}


\begin{document}
\title{Using Edge Contractions and Vertex Deletions to Reduce the Independence Number and the Clique Number}
%
%
\author{Felicia Lucke\orcidID{0000-0002-9860-2928} \and
Felix Mann({\scriptsize\Letter})\orcidID{0000-0003-0016-4024}}
\authorrunning{F. Lucke and F. Mann}
%
\institute{Université de Fribourg, Boulevard de Pérolles 90, 1700 Fribourg, Switzerland
\email{\{felicia.lucke,felix.mann\}@unifr.ch}}
\maketitle              
\begin{abstract}
We consider the following problem: for a given graph $G$ and two integers $k$ and $d$, can we apply a fixed graph operation at most $k$ times in order to reduce a given graph parameter $\pi$ by at least $d$?
We show that this problem is NP-hard when the parameter is the independence number and the graph operation is vertex deletion or edge contraction, even for fixed $d=1$ and when restricted to chordal graphs. 
We also give a polynomial time algorithm for bipartite graphs when the operation is edge contraction, the parameter is the independence number and $d$ is fixed.
Further, we complete the complexity dichotomy on $H$-free graphs when the parameter is the clique number and the operation is edge contraction by showing that this problem is NP-hard in $(C_3+P_1)$-free graphs even for fixed $d=1$. 
Our results answer several open questions stated in [Diner et al., Theoretical Computer Science, 746, p. 49-72 (2012)].

\keywords{blocker problems  \and edge contraction \and vertex deletion \and inde\-pendence number \and clique number}
\end{abstract}

\section{Introduction}

Blocker problems are a type of graph modification problems which are characterised by a set $\mathcal{O}$ of graph modification operations (for example vertex deletion or edge contraction), a graph parameter $\pi$ and an integer threshold $d\geq 1$. 
The aim of the problem is to determine, for a given graph $G$, the smallest sequence of operations from $\mathcal{O}$ which transforms $G$ into a graph $G'$ such that $\pi(G')\leq \pi(G)-d$.


As in the case of regular graph modification problems, we often consider a set of operations consisting of a single graph operation, typically vertex deletion, edge contraction, edge addition or edge deletion. Amongst the parameters which have been studied are the chromatic number $\chi$ (see \cite{Chromatic}), the matching number $\mu$ (see \cite{Matching}), the length of a longest path (see \cite{Paths1,Paths2}), the (total or semitotal) domination  number $\gamma$ ($\gamma_t$ and $\gamma_{t2}$, respectively) (see \cite{TotalDom,domination,SemiTotalDom}), the clique number~$\omega$~(see \cite{clique}) and the independence number $\alpha$ (see \cite{Stable}).

In this paper, the set of allowed graph operations will always consist of only one operation, either \emph{vertex deletion} or \emph{edge contraction}.
Given a graph $G$, we denote by $G-U$ the graph from which a subset of vertices $U\subseteq V(G)$ has been deleted. 
Given an edge $uv\in E(G)$, contracting the edge $uv$ means deleting the vertices $u$ and $v$ and replacing them with a single new vertex which is adjacent to every neighbour of $u$ or $v$.
We denote by $G/S$ the graph in which every edge from an edge set $S\subseteq E(G)$ has been contracted.
We consider the following two problems, where $d\geq 1$ is a fixed integer.

\begin{center}
\fbox{
\begin{minipage}{4.5in}
\textsc{$d$-Deletion Blocker ($\pi$)}
\begin{description}
\item[Instance:] A graph $G$ and an integer $k$.
\item[Question:] Is there a set $U \subseteq V(G)$, $|U| \leq k$, such that \\$\pi(G-U) \leq \pi(G)-d$? 
\end{description}
\end{minipage}}
\end{center}
\begin{center}
\fbox{
\begin{minipage}{4.5in}
\textsc{$d$-Contraction Blocker ($\pi$)}
\begin{description}
\item[Instance:] A graph $G$ and an integer $k$.
\item[Question:] Is there a set $S \subseteq E(G)$, $|S| \leq k$, such that $\pi(G /S) \leq \pi(G)-d$? 
\end{description}
\end{minipage}}
\end{center}

When $d$ is not fixed but part of the input, the problems are called \textsc{Deletion Blocker}($\pi$) and \textsc{Contraction Blocker}($\pi$), respectively.

When $\pi=\alpha$ or $\pi=\omega$, both problems above are NP-hard on general graphs \cite{CliqueLatest}, so it is natural to ask if these problems remain NP-hard when the input is restricted to a special graph class.

\begin{table}[]
\caption{\label{ComplexTable}The table of complexities for some graph classes. Here, P means solvable in polynomial time, whereas NP-h and NP-c mean NP-hard and NP-complete, respectively. A question mark means that the case is open. Everything in \textbf{bold} are new results from this paper, all other cases are referenced in \cite{CliqueLatest}, where an older version of this table is given.}
\centering
\begin{tabular}{| llllll |}
\hline
Class       & \multicolumn{2}{l}{\textsc{Contraction Blocker}($\pi$)} &  & \multicolumn{2}{l|}{\textsc{Deletion Blocker}($\pi$)} \\ \cline{2-3} \cline{5-6} 
            & $\pi=\alpha$               & $\pi=\omega$               &  & $\pi=\alpha$              & $\pi=\omega$             \\ \hline\hline
Tree        & \textsc{P}                          & \textsc{P}                          &  & \textsc{P}                         & \textsc{P}                        \\\hline
Bipartite   & \textsc{NP}-h;                       & \textsc{P}                          &  & \textsc{P}                         & \textsc{P}                        \\
					&  \hspace{4pt}\textbf{d fixed: \textsc{P}}  &  & &   &\\\hline
Cobipartite & $d=1$: \textsc{NP}-c                  & \textsc{NP}-c;            &  & \textsc{P}                         & \textsc{P}                        \\
& & \hspace{4pt} $d$ fixed: \textsc{P} & & &\\\hline
Cograph     & \textsc{P}                          & \textsc{P}                          &  & \textsc{P}                         & \textsc{P}                        \\\hline
Split       & \textsc{NP}-c;           & \textsc{NP}-c;           &  & \textsc{NP}-c;          & \textsc{NP}-c;          \\
				& \hspace{4pt} $d$ fixed: \textsc{P} &  \hspace{4pt}$d$ fixed: \textsc{P} & &  \hspace{4pt}$d$ fixed: \textsc{P} &  \hspace{4pt}$d$ fixed: \textsc{P} \\\hline
Interval    & {?}                          & \textsc{P}                          &  & {?}                           & \textsc{P}                        \\\hline
Chordal     & \textbf{d=1: \textsc{NP}-c}                      & $d=1$: \textsc{NP}-c                  &  & \textbf{d=1: \textsc{NP}-c}                      & $d=1$: \textsc{NP}-c                \\\hline
Perfect     & $d=1$: \textsc{NP}-h                  & $d=1$: \textsc{NP}-h                  &  & \textbf{d=1: \textsc{NP}-c}                      & $d=1$: \textsc{NP}-c                \\ \hline
\end{tabular}

\end{table}

The authors of \cite{CliqueLatest} show that \textsc{Contraction Blocker}($\alpha$) in bipartite and chordal graphs as well as \textsc{Deletion Blocker}($\alpha$) in chordal graphs are \textsc{NP}-hard when the threshold $d$ is part of the input.
However, as an open question, they ask for the complexity of the problem when $d$ is fixed.
We show that \textsc{Contraction Blocker}($\alpha$) in bipartite graphs is solvable in polynomial time if $d$ is fixed and that the other problems are \np-hard even if $d=1$. 
An overview of the complexities in some graph classes is given in \Cref{ComplexTable}.

A \emph{monogenic} graph class is characterised by a single forbidden induced subgraph $H$. 
For a given graph parameter $\pi$, it is interesting to establish a \emph{complexity dichotomy for monogenic graphs}, that is, to determine the complexity of \textsc{\mbox{($d$-)Deletion} Blocker($\pi$)} or \textsc{($d$-)Contraction Blocker($\pi$)} in $H$-free graphs, for every graph $H$.
For example, such a dichotomy has been established for \textsc{Deletion Blocker}($\pi$) for all $\pi\in\set{\alpha,\omega,\chi}$ and \textsc{Contraction Blocker}($\pi$) for $\pi\in\set{\alpha,\chi}$ (all \cite{CliqueLatest}), \textsc{Contraction Blocker}($\gamma_{t2}$) (for $d=k=1$, \cite{SemiTotalDom}), \textsc{Contraction Blocker}($\gamma_{t}$) (for $d=k=1$, \cite{TotalDom}) and \textsc{Contraction Blocker}($\gamma$) (for $d=k=1$, \cite{domination}).
In \cite{CliqueLatest}, the computational complexity of \textsc{Contraction Blocker}($\omega$) in $H$-free graphs has been determined for every $H$ except $H=C_3+P_1$. 
We show that this case is \np-hard even when $d=1$ and complete hence the dichotomy.

The paper is organised as follows: 
In \Cref{preliminaries} we explain notation and terminology. 
In \Cref{hardness} we give the proofs of NP-hardness or NP-completeness of the aforementioned problems.
Finally, in \Cref{algorithms} we give a polynomial-time algorithm for \dcontracindep\ in bipartite graphs.

\section{Preliminaries}\label{preliminaries}

Throughout this paper, we assume that all graphs are connected unless stated differently. 

We refer the reader to \cite{diestel} for any terminology not defined here.

For a graph $G$ we denote by $V(G)$ the vertex set of the graph and by $E(G)$ its edge set.
For two graphs $G$ and $H$ we denote by $G+H$ the disjoint union of $G$ and $H$.
For two vertices $u,v \in V(G)$ we denote by $\dist_G(u,v)$ the \emph{distance} between $u$ and $v$, which is the number of edges in a shortest path between $u$ and $v$. 
For two sets of vertices $U,W\subseteq V(G)$, the \emph{distance between $U$ and $W$}, denoted by $\dist_G(U,W)$, is given by $\min_{u\in U, w\in W}\dist_G(u,w)$. 
For a set of edges $S\subseteq E(G)$ we denote by $V(S)$ the set of vertices in $V(G)$ which are endpoints of at least one edge of $S$.
Let $v \in V(G)$, then the \emph{closed neighbourhood of $v$}, denoted by $N_G[v]$, is the set $\set{u\in V(G):\,\dist_G(u,v) \leq 1}$. 
Similarly, we define for a set $U \subseteq V(G)$ the closed neighbourhood of $U$ as $N_G[U] = \set{u \in V(G) : \exists v \in U, \dist_G(v,u) \leq 1}$. 
For a vertex $v\in V(G)$ and a set of vertices $U\subseteq V(G)$, we say that $v$ \emph{is complete to} $U$ if $v$ is adjacent to every vertex of $U$.
Let $G$ be a graph and $S\subseteq E(G)$.
We denote by $G\big\vert_S$ the graph whose vertex set is $V(G)$ and whose edge set is $S$.
For any $U\subseteq V(G)$, we denote by $G[U]$ the subgraph of $G$ induced by $U$. 
For any $U\subseteq V(G)$, we denote by $G-U$ the graph $G[V(G)\setminus U]$.
For any vertex $v\in V(G)$, we denote by $G-v$ the graph $G-\set{v}$.

Let $S \subseteq E(G)$. 
We denote by $G/S$ the graph whose vertices are in one-to-one correspondence to the connected components of $G\big\vert_S$ and two vertices $u,v\in V(G/S)$ are adjacent if and only if their corresponding connected components $A,B$ of $G\big\vert_S$ satisfy $\dist_G(V(A),V(B))=1$.
This is equivalent to the regular notion of contracting the edges in $S$. 
However, this definition allows us to make the notation in the proofs simpler and less confusing.

We say that a set $I \subseteq V(G)$ is \emph{independent} if the vertices contained in it are pairwise non-adjacent. 
We denote by $\alpha(G)$ the size of a maximum independent set in $G$. 
The decision problem \mis \ takes as input a graph $G$ and an integer $k$ and outputs \yes\, if and only if there is an independent set of size at least $k$ in $G$. 
We say that a set $U\subseteq V(G)$ is a \emph{clique} if every two vertices in $U$ are adjacent.
We denote by $\omega(G)$ the size of a maximum clique in $G$.
We call a set $U \subseteq V(G)$ a \emph{vertex cover}, if for every edge $uv \in E(G)$ we have that $u \in U$ or $v \in U$. 
The decision problem \mvc \ takes as input a graph $G$ and an integer $k$ and outputs \yes\, if and only if there is a vertex cover of size at most $k$ in $G$.  
We denote by $\tau(G)$ the size of a minimum vertex cover in $G$. 
Furthermore, we call a graph $M$ a \emph{matching} of a graph $G$, if $V(M) \subseteq V(G)$, $E(M) \subseteq E(G)$ and each vertex in $M$ has exactly one neighbour in $M$. 
We say that a matching is a \emph{maximum matching} if it contains the maximum possible number of edges and denote this number by $\mu(G)$.
Observe that we did not use the standard definition of a matching as a set of non-adjacent edges. 
This was done in order to simplify the notation in the proofs.
However, the edge set of a matching in our definition follows the conventional definition.

A graph without cycles is called a \emph{forest} and a connected forest is a \emph{tree}.
It is well-known that a tree has one more vertex than it has edges.
A graph is said to be \emph{chordal} if it has no induced cycle of length at least four.
A graph $G$ is \emph{bipartite} if we can find a partition of the vertices into two sets $V(G) = U \cup W$ such that $U$ and $W$ are both independent sets.
For a given graph $H$, we say that the graph $G$ is \emph{$H$-free} if it does not contain $H$ as an induced subgraph.

For a positive integer $i$ we denote by $P_i$ and $C_i$ the path and the cycle on $i$~vertices, respectively.
We call the graph which is given in Figure \ref{paw} a \emph{paw}.
\begin{figure}
\centering
\begin{tikzpicture}
\node[vertex](c1) at(0,0){};
\node[vertex](c2) at(30:1){};
\node[vertex](c3) at(180:1){};
\node[vertex](c4) at(330:1){};
\draw[hedge](c1)--(c4);
\draw[hedge](c1)--(c3);
\draw[hedge](c1)--(c2);
\draw[hedge](c2)--(c4);
\end{tikzpicture}
\caption{\label{paw}The paw}
\end{figure}

For a given graph parameter $\pi$ we say that a set $S\subseteq E(G)$ is \emph{$\pi$-contraction-critical} if $\pi(G/S)<\pi(G)$.
We say that a set $U\subseteq V(G)$ is \emph{$\pi$-deletion-critical} if $\pi(G-U)<\pi(G)$.

We will use the following two results. The first one is due to K\H{o}nig,  the second one is well-known and easy to see.
\begin{lemma}[see \cite{diestel}]
\label{Koenig}
Let $G$ be a bipartite graph. Then $\mu(G) = \tau(G)$. 
\end{lemma}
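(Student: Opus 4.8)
The plan is to establish the two inequalities $\mu(G) \le \tau(G)$ and $\tau(G) \le \mu(G)$ separately, the first being routine and the second carrying the real content.

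For $\mu(G) \le \tau(G)$, I would fix a maximum matching $M$ and a minimum vertex cover $C$. Every edge of $M$ must have an endpoint in $C$, and since distinct edges of $M$ share no endpoint (in our definition each vertex of $M$ has exactly one neighbour in $M$), they require distinct vertices of $C$; hence $|C| \ge |E(M)| = \mu(G)$, giving $\tau(G) \ge \mu(G)$.

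For the reverse inequality I would explicitly build a vertex cover of size at most $\mu(G)$ from a maximum matching $M$, using alternating paths. Fix the bipartition $V(G) = U \cup W$, let $A \subseteq U$ be the set of $M$-unmatched vertices of $U$, and let $Z$ be the set of all vertices reachable from $A$ by $M$-alternating paths, that is, paths whose edges alternate between $E(G) \setminus E(M)$ and $E(M)$, starting with a non-matching edge. I would then set $C = (U \setminus Z) \cup (W \cap Z)$ and prove two things: that $C$ is a vertex cover, and that $|C| \le \mu(G)$.

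That $C$ covers every edge follows directly from the definition of $Z$: for an edge $uw$ with $u \in U$ and $w \in W$, if $u \notin Z$ then $u \in U \setminus Z \subseteq C$, while if $u \in Z$ then extending an alternating path from $A$ to $u$ along $uw$ shows $w \in Z$, so $w \in W \cap Z \subseteq C$. The size bound is where maximality of $M$ is essential, and I expect this to be the main obstacle. The key point is that a maximum $M$ admits no augmenting path, so every vertex of $W \cap Z$ is matched (otherwise the alternating path reaching it would augment $M$) with partner in $U \cap Z$, while every vertex of $U \setminus Z$ is matched (the only unmatched vertices of $U$ lie in $A \subseteq Z$) with partner in $W \setminus Z$. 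Assigning to each vertex of $C$ the edge of $M$ it lies on is then injective, since one checks that no edge of $M$ has both endpoints in $C$; hence $|C| \le |E(M)| = \mu(G)$. Combining $\mu(G) \le \tau(G) \le |C| \le \mu(G)$ forces equality throughout, which is the claim.
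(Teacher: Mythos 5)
Your proof is correct. Note that the paper itself does not prove this lemma: it is K\H{o}nig's classical theorem, stated with a citation to Diestel's textbook, so there is no in-paper argument to compare against. What you have written is the standard alternating-path proof (essentially the one found in the cited reference): the easy inequality $\mu(G)\le\tau(G)$ because the edges of a matching need pairwise distinct cover vertices, and the hard inequality via the cover $C=(U\setminus Z)\cup(W\cap Z)$, where $Z$ is the set of vertices reachable by $M$-alternating paths from the unmatched vertices $A\subseteq U$, with maximality of $M$ (no augmenting path) used to inject $C$ into $E(M)$. One small point worth tightening: when you argue that $C$ covers an edge $uw$ with $u\in Z$, you say you ``extend'' the alternating path from $A$ to $u$ along $uw$; this works verbatim only when $uw\notin E(M)$. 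If $uw\in E(M)$, then $u\notin A$ (it is matched), so the alternating path reaching $u$ must arrive through the unique matching edge at $u$, which is precisely $uw$; hence $w$ already lies on that path and $w\in Z$ anyway. With this one-line case distinction the covering claim is complete, and the rest of your argument (in particular that no edge of $M$ joins $U\setminus Z$ to $W\cap Z$, making your assignment injective) is sound.
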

\begin{lemma}
\label{mvc+indset}
Let $G$ be a graph and let $I\subseteq V(G)$ be a maximum independent set. Then $V(G) \setminus I$ is a minimum vertex cover and hence $\tau(G) +\alpha(G)= |V(G)|$. 
\end{lemma}
In \cite{poljak} it was shown that \mis \ is NP-complete in $C_3$-free graphs. 
This and \Cref{mvc+indset} imply the following corollary.

\begin{corollary}
\label{mvcNPcomp}
\mvc \ is NP-complete in $C_3$-free graphs.
\end{corollary}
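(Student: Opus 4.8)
The plan is to establish a reduction between the independence number and the vertex cover number, and then invoke the two cited facts. The statement to prove is that \mvc\ is NP-complete in $C_3$-free graphs, and we already have at our disposal the result of \cite{poljak} that \mis\ is NP-complete in $C_3$-free graphs, together with \Cref{mvc+indset}, which relates $\alpha(G)$ and $\tau(G)$ via $\tau(G)+\alpha(G)=|V(G)|$.

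First I would observe that membership in NP is immediate: given a graph $G$ and a candidate vertex cover $U$, one can verify in polynomial time both that $|U|\le k$ and that every edge of $G$ has an endpoint in $U$. The substance of the corollary is therefore NP-hardness, which I would obtain by a trivial (identity) reduction from \mis\ restricted to $C_3$-free graphs. Concretely, given an instance $(G,k)$ of \mis\ where $G$ is $C_3$-free, I would map it to the instance $(G,\,|V(G)|-k)$ of \mvc, keeping the same graph $G$; since $G$ is unchanged, it remains $C_3$-free, and the transformation is clearly computable in polynomial time.

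The correctness of the reduction is exactly the content of \Cref{mvc+indset}. By that lemma, if $I$ is a maximum independent set then $V(G)\setminus I$ is a minimum vertex cover, so $G$ has an independent set of size at least $k$ if and only if $G$ has a vertex cover of size at most $|V(G)|-k$. Hence $(G,k)$ is a \yes-instance of \mis\ precisely when $(G,|V(G)|-k)$ is a \yes-instance of \mvc, which together with the NP-completeness of \mis\ on $C_3$-free graphs from \cite{poljak} yields NP-hardness of \mvc\ on the same class.

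I do not anticipate a genuine obstacle here, as this is a standard complementation argument; the only point requiring a moment's care is to confirm that the graph class $C_3$-free is preserved under the reduction, which holds trivially because the reduction does not alter the graph at all. Combining membership in NP with the NP-hardness just established gives NP-completeness, completing the proof of the corollary.
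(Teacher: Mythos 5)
Your proof is correct and follows exactly the paper's intended argument: the paper derives the corollary from the NP-completeness of \mis\ in $C_3$-free graphs \cite{poljak} together with \Cref{mvc+indset}, which is precisely your complementation reduction $(G,k)\mapsto(G,|V(G)|-k)$. You have merely spelled out the details (NP membership and preservation of $C_3$-freeness) that the paper leaves implicit.
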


\section{Hardness proofs}\label{hardness}

We begin by restating \textsc{Vertex Cover} as a satisfiability problem in order to simplify the notation in the proofs.

\begin{center}
\fbox{
\begin{minipage}{4.5in}
\wps
\begin{description}
\item[Instance:] A variable set $X$,  a clause set $C$ in which all clauses contain exactly two literals and every literal is positive, as well as an integer $k$.
\item[Question:] Is there a truth assignment of the variables (that is, a mapping $f\colon\, X\rightarrow \set{\textrm{true},\,\textrm{false}}$) such that at least one literal in each clause is true and there  are at most $k$ variables which are true.
\end{description}
\end{minipage}}
\end{center}

If $\Phi=(G,k)$ is an instance of \textsc{Vertex Cover} then taking $X=V(G)$ as the variable set and $C=\set{(u \lor w)\colon\, uw\in E(G)}$ as the set of clauses yields an instance $(X,C,k)$ of \wps\ which is clearly equivalent to $\Phi$. 
Since \textsc{Vertex Cover} is known to be NP-hard (see \Cref{mvcNPcomp}), it follows that \wps\ is NP-hard, too.

Let $G$ be a graph and $S, S' \subseteq E(G)$ such that for every connected component $A$ of $G\big\vert_S$ there is a connected component $A'$ of $G\big\vert_{S'}$ with $V(A)=V(A')$. 
Then, $G/S = G/S'$ and thus we get the following corollary.


\begin{corollary}\label{forest}
Let $G$ be a graph and $S\subseteq E(G)$ a minimal $\alpha$-contraction-critical set of edges. 
Then, $G\big\vert_S$ is a forest.
\end{corollary}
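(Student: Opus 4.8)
The plan is to prove the contrapositive: if $G\big\vert_S$ is not a forest, then $S$ is not a minimal $\alpha$-contraction-critical set. The key observation is the one the authors state just before the corollary, namely that $G/S = G/S'$ whenever $S$ and $S'$ induce the same vertex partition (the same connected components on $V(G)$). So the strategy is to locate a redundant edge of $S$ whose removal does not change the partition into connected components, and hence does not change $G/S$, contradicting minimality.

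First I would spell out what $\alpha$-contraction-critical means together with minimality: $S$ satisfies $\alpha(G/S) < \alpha(G)$, and no proper subset $S' \subsetneq S$ satisfies $\alpha(G/S') < \alpha(G)$. Now suppose for contradiction that $G\big\vert_S$ contains a cycle. Then some edge $e \in S$ lies on a cycle in $G\big\vert_S$. Consider $S' = S \setminus \set{e}$. Because $e$ lies on a cycle, its two endpoints remain connected in $G\big\vert_{S'}$ via the rest of the cycle, so every connected component of $G\big\vert_{S'}$ has exactly the same vertex set as the corresponding component of $G\big\vert_S$ — removing a cycle edge never disconnects a component. By the displayed remark preceding the corollary, this gives $G/S = G/S'$.

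From $G/S = G/S'$ we immediately get $\alpha(G/S') = \alpha(G/S) < \alpha(G)$, so $S'$ is itself $\alpha$-contraction-critical. Since $S' \subsetneq S$, this contradicts the minimality of $S$. Therefore $G\big\vert_S$ contains no cycle, i.e.\ it is a forest, which is exactly the claim.

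I expect the only substantive point — really more of a sanity check than an obstacle — to be the clean justification that deleting an edge lying on a cycle preserves every connected component vertex set, so that the partition underlying the contraction is unchanged; this is elementary graph theory and lets us invoke the equivalence $G/S = G/S'$ directly rather than re-deriving how $\alpha$ behaves under contraction. The rest is a routine minimality-versus-criticality contradiction, so I would keep the write-up short and lean entirely on the preceding remark.
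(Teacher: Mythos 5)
Your proposal is correct and follows exactly the route the paper intends: the paper states the remark that equal component partitions give $G/S = G/S'$ and then declares the corollary to follow, and your write-up simply makes explicit the cycle-edge-removal argument that the paper leaves implicit. The one point you flag as needing care (deleting a cycle edge preserves component vertex sets) is indeed elementary and handled correctly, so there is nothing to add.
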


\begin{theorem}
\label{contchordal}
$1$-\contracindep{} is \np-complete in chordal graphs.
\end{theorem}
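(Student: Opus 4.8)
The plan is to reduce from \wps\ (equivalently, \mvc\ on $C_3$-free graphs, which is \np-complete by \Cref{mvcNPcomp}), since that problem is the satisfiability reformulation the authors have just set up for exactly this purpose. Given an instance $(X,C,k)$ of \wps, I would construct a chordal graph $G$ together with an integer $k'$ such that $G$ admits an $\alpha$-contraction-critical set $S$ with $|S|\le k'$ if and only if the \wps\ instance is satisfiable with at most $k$ true variables. Membership in \np\ is the easy direction: given a candidate edge set $S$ with $|S|\le k$, we can contract it and compute $\alpha(G/S)$ in polynomial time (for instance, because the constructed $G$ will be chordal and $\alpha$ is polynomially computable on chordal graphs, or simply because we only need to verify $\alpha(G/S)\le\alpha(G)-1$ against a certified maximum independent set); so the bulk of the work is the hardness reduction and its correctness.

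For the construction I would build a gadget for each variable and wire the clauses into it so that ``setting a variable to true'' corresponds to contracting one designated edge. The \np-hardness of \textsc{Contraction Blocker}($\alpha$) on chordal graphs when $d$ is part of the input is already known from \cite{CliqueLatest}, so the real content here is forcing $d=1$: the difficulty is that contracting a single edge in a chordal graph typically does not lower $\alpha$ at all, so I must design the gadgetry so that reducing $\alpha$ by exactly one is tightly coupled to a global covering choice rather than a local one. Concretely, I expect to attach, to each variable, a clique or near-clique component (using the $K_i$ gadget drawn via the \texttt{\textbackslash gadget} macro in the preamble, which hangs a pendant-like apex vertex $v_i$ off a clique) so that independent sets are forced to pick exactly one representative vertex per gadget, and the only way to destroy all maximum independent sets simultaneously, driving $\alpha$ down by one, is to contract edges corresponding to a satisfying assignment of the clauses. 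Chordality is maintained because cliques, pendant attachments, and the clause-incidence wiring can be arranged without creating induced cycles of length at least four; I would verify this by exhibiting a perfect elimination ordering.

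The heart of the correctness argument is the two-way equivalence relating $\alpha(G/S)\le\alpha(G)-1$ to the existence of a small satisfying truth assignment. For the forward direction I would use \Cref{forest}: a minimal $\alpha$-contraction-critical set induces a forest, which sharply constrains which edge sets can be the ``solution'' and lets me read off, from the components of $G\big\vert_S$, a truth assignment whose true-variable count is bounded by $k$ in terms of $|S|\le k'$. For the reverse direction I would show that any satisfying assignment with at most $k$ true variables yields an explicit contraction set dropping every maximum independent set's representative in at least one gadget, thereby lowering $\alpha$ by one. The main obstacle I anticipate is the quantitative bookkeeping that forces the threshold: I must choose the gadget sizes and $k'$ so that \emph{only} contractions corresponding to legitimate variable-setting are ever cheaper than trivially large alternatives, and so that the induced independence number of $G$ is known exactly and changes by precisely one under the intended contractions; getting these counts to line up while preserving chordality is where the care lies. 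I would lean on \Cref{mvc+indset} and \Cref{Koenig} to translate between independent sets, vertex covers, and the \wps\ encoding whenever the counting becomes delicate.
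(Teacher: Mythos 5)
Your overall approach is the same as the paper's: membership in \np\ via polynomial-time computability of $\alpha$ on chordal graphs (which are closed under contraction), hardness by reduction from \wps, variable gadgets consisting of a clique $K_x$ with an apex $v_x$ so that contracting an edge at $v_x$ amounts to deleting it, \Cref{forest} to restrict critical sets, and reading a truth assignment off the components of $G\big\vert_S$. However, what you have written is a plan rather than a proof, and the parts you leave open are precisely the technical content. First, you never specify how the clauses are wired to the variable gadgets; in the paper, all clause vertices $v_c$ form a clique $K_C$ and each $v_c$ is made complete to $K_x$ and $K_y$ for $c=(x\lor y)$. This particular choice is what makes $G$ chordal, pins down $\alpha(G)=\vert X\vert+1$ (one representative per $G_x$ plus one clause vertex), and drives the whole backward direction; without it nothing can be verified. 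Second, the gadget size matters concretely: $\vert K_x\vert=2k+1$ guarantees that any set of at most $k$ edges, which touches at most $2k$ vertices, leaves some $u_x\in K_x$ untouched, and these untouched vertices are the building blocks of the contradiction argument. Saying the sizes ``need to line up'' is not a substitute for this choice.

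Third, and most seriously, your framing of the hard direction is off. You propose to design the gadgets so that ``only contractions corresponding to legitimate variable-setting are ever cheaper than trivially large alternatives,'' i.e.\ to force every small $\alpha$-contraction-critical set $S$ to have the intended form. That is neither achievable nor necessary: a critical $S$ may contain arbitrary edges, in particular edges inside the clause clique $K_C$, and the reduction must extract a small satisfying assignment from \emph{any} such $S$. The paper does this with a charging argument you do not supply: let $H$ be the forest formed by the edges of $S$ inside $K_C$; for every tree component $T$ of $H$ there must exist a clause vertex $v_c\in V(T)$, $c=(x\lor y)$, with $G_x\cap V(S)\neq\varnothing$ (otherwise $T$ together with the vertices $v_x$ for gadgets adjacent to $T$ and the untouched $u_x$ elsewhere yields an independent set of size $\vert X\vert+1$ in $G/S$, contradicting criticality). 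One then sets $x$ true whenever $G_x\cap V(S)\neq\varnothing$ (charged to edges of $S$ outside $K_C$), and for each remaining unsatisfied clause $v_c\in V(T)$ sets one of its variables true (at most $\vert E(T)\vert$ many per component, by the tree property), for a total of at most $\vert S\vert\leq k$ true variables. This counting is the heart of the proof, not routine bookkeeping, and without it your reduction is not established.
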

\begin{proof}
It was shown in \cite{gavril} that \mis \ can be solved in polynomial time for chordal graphs. 
Since the family of chordal graphs is closed under edge contractions, for a given chordal graph $G$ and a set $S\subseteq E(G)$, it is possible to check in polynomial time whether $S$ is $\alpha$-contraction-critical. 
It follows that 1-\contracindep{} is in \np\ for chordal graphs. 
In order to show \mbox{\np-hardness}, we reduce from \wps, which was shown to be \np-hard above. 
Let $\Phi=(X,C,k)$ be an instance of \wps. 
We construct a chordal graph $G$ such that $(G,k)$ is a \yes-instance for 1-\contracindep{} if and only if $\Phi$ is a \yes-instance for \wps, as follows:

For every variable $x\in X$, we introduce a set of vertices $G_x$ with $G_x=\set{v_x}\cup K_x$, where $K_x$ is a set of $2k+1$ vertices which induce a clique. 
We make $v_x$ complete to $K_x$. 
For every clause $c\in C$, we introduce a vertex $v_c$. We define $K_C = \bigcup_{c \in C} \set{v_c}$. We add edges so that $G[K_C]$ is a clique. For every clause $c\in C$, $c= (x\lor y)$, we make $v_c$ complete to $K_{x}$ and $K_{y}$ (see Figure 1 for an example).

Observe first that the graph $G$ is indeed chordal: if a cycle of length at least four contains at least three vertices of $K_C$, it follows immediately that the cycle cannot be induced, since $K_C$ induces a clique.
Otherwise, such a cycle contains at most two vertices of $K_C$.
If there are two vertices $w$ and $w'$ of the cycle which are contained in $G_x$ and $G_y$, respectively, with $x,y \in X, x\neq y$, then the cycle has to contain a chord in $G[K_C]$ and is thus not induced. 
If all vertices of the cycle are in $K_C\cup G_x$ for some fixed $x\in X$, then there are at least two vertices $w$ and $w'$ contained in $K_x$.
Hence, the cycle cannot be induced since $w$ and $w'$ are adjacent and have the same neighbourhood.
It follows that $G$ cannot have any induced cycle of length at least 4 and is thus chordal.

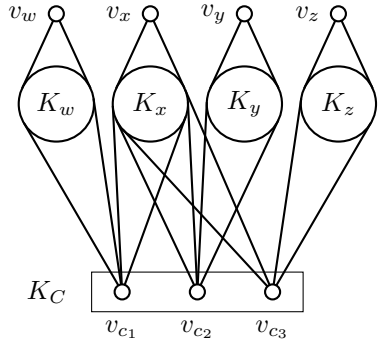
\begin{figure}
\centering
\begin{tikzpicture}
\def\radius{0.5}
\def\k{1}
\def\h{2.5}

\begin{scope}[shift = {(-1.875*\k,\h)}]
	\gadget{w}{\radius}
\end{scope}

\begin{scope}[shift = {(-0.625*\k,\h)}]
	\gadget{x}{\radius}
\end{scope}

\begin{scope}[shift = {(0.625*\k,\h)}]
	\gadget{y}{\radius}
\end{scope}

\begin{scope}[shift = {(1.875*\k,\h)}]
	\gadget{z}{\radius}
\end{scope}


\tangent{-1.875*\k}{\h}{\radius}{-1*\k}{0}

\tangent{-0.625*\k}{\h}{\radius}{0}{0}
\tangent{-0.625*\k}{\h}{\radius}{-1*\k}{0}
\tangent{-0.625*\k}{\h}{\radius}{1*\k}{0}

\tangent{0.625*\k}{\h}{\radius}{0}{0}

\tangent{1.875*\k}{\h}{\radius}{1*\k}{0}

\node[vertex, ] (c3) at (-1*\k,0){};
\node[vertex,] (c4) at (0*\k,0){};
\node[vertex,] (c5) at (1*\k,0){};
  
\node[] (c3t) at (-1*\k,-0.5){$v_{c_1}$};
\node[] (c4t) at (0*\k,-0.5){$v_{c_2}$};
\node[] (c5t) at (1*\k,-0.5){$v_{c_3}$};
  
  
\node[] (Kct) at (-2*\k,0){$K_C$};
\node[rectangle,draw, minimum width = 80*\k , minimum height = 15*\k](r) at (0*\k,0.0){};
  
\end{tikzpicture}

\caption{This is the graph corresponding to the instance of \wps \ given by the variables $w,x,y,z$ and the clauses $c_1 = w\lor x, c_2 = x\lor y$ and $c_3 = x\lor z$. The rectangular box corresponds to $G[K_C]$, the vertices contained in it induce a clique. Every set $K_i$ induces a clique and the lines between a vertex and a set $K_i$ mean that this vertex is complete to $K_i$.}
\end{figure}

Since $G_x$ induces a clique for every $x\in X$, it can contain at most one vertex in any independent set; the same applies to $K_C$. 
Thus, $\alpha(G)\leq \vert X\vert +1$. 
Let $c\in C$. 
Since the set $\{v_x \colon x\in X\}\cup \set{v_c}$ is an independent set of size $\vert X\vert +1$, it follows that $\alpha(G)=\vert X\vert +1$.

Let us assume that $\Phi$ is a \yes-instance of \wps.
Let $X_p$ be the set of positive variables of a satisfying assignment of $\Phi$.
For each $x \in X_p$, let $e_x$ be an edge incident to $v_{x}$ and let $S=\set{e_x\vert x\in X_p}$.
Let $G'=G/S$. We claim that $\alpha(G')<\alpha(G)$.
To see this, observe first that for any $x\in X_p$, contracting $e_x$ is equivalent to deleting the vertex $v_{x}$, since $N_G(v_{x})=K_{x}$ induces a clique.
Therefore, we have that $G'\simeq G-\set{v_x:\, x\in X_p}$.
Suppose for a contradiction that there is an independent set $I$ of $G'$ of size $|X|+1$.
Since $\vert I\cap K_x\vert\leq 1$ (for $x\in X_p)$ and $\vert I\cap G_x\vert\leq 1$ (for all $x\in X\setminus X_p$), it follows that there exists $c\in C$ such that $v_c\in K_C\cap I$.
Furthermore, the inequalities above all have to be equalities.
By the choice of $X_p$, it follows that there is $x\in X_p$ such that $x$ is a literal in~$c$.
Since $\vert I\cap K_x\vert=1$, there is a vertex $w\in I\cap K_x$ which is adjacent to $v_c$, contradicting the fact that $I$ is independent.
It follows that $S$ is $\alpha$-contraction-critical.

For the other direction, assume that $\Phi' = (G,k)$ is a \yes-instance of 1-\contracindep{}. Let $S$ be a minimum $\alpha$-contraction-critical set of edges such that $\vert S\vert\leq k$. 
By \Cref{forest}, the graph $G\big\vert_S$ is a forest.

For any $x\in X$, there is a vertex $u_x\in K_x\setminus V(S)$. 
This follows from the\break fact that $k$ edges can be incident to at most $2k$ vertices and\break $\vert K_x\vert =2k+1$.  
Let $H$ be the graph with vertex set $V(H) = K_C$ and edge set $E(H) = \set{uv\in S:u,v \in K_C}$.

Suppose for a contradiction that there is a connected component $T$ of $H$ such that for every $x\in X$ with $\dist_G(G_x,V(T))=1$ we have $G_x\cap V(S)=\varnothing$.
In other words, for every $c=(x\lor y)\in C$ with $v_c\in V(T)$ we have $G_x\cap V(S)=G_y\cap V(S)=\varnothing$.
So we have that $N_G[V(T)]\cap V(S)\subseteq V(T)$, and thus $T$ is also a connected component in $G\big\vert_S$.
For every $x\in X$ the set $\set{u_x}$ is a connected component in $G\big\vert_S$, that is, $u_x$ is not incident to any edge in $S$.
Further, for every $x\in X$ where $\dist_G(G_x,V(T))=1$, we have that $G_x\cap V(S)=\varnothing$ and thus $\set{v_x}$ is a connected component in $G\big\vert_S$.
Let $X_1= \set{x\in X\colon\, \dist_G(u_x,V(T))=1}$ and $X_2=X\setminus X_1$.
We claim that the set $I=T \cup\set{\set{v_x}\colon\, x\in X_1}\cup\set{\set{u_x}\colon\, x\in X_2}$ is a set of connected components of $G\big\vert_S$ which correspond to vertices in $G/S$ who are pairwise at distance at least two.
In other words, $I$ corresponds to an independent set in $G/S$.
The connected components of the set $I$ are pairwise at distance at least two in $G$ and so they correspond to pairwise non-adjacent vertices in $G/S$.
Thus, the set $I$ corresponds indeed to an independent set of cardinality $\vert X\vert+1$, a contradiction to the assumption that $S$ is $\alpha$-contraction-critical.
It follows that there is no connected component $T$ of $H$ such that for every $x\in X$ with $\dist_G(G_x,V(T))=1$ we have $G_x\cap V(S)=\varnothing$.

We can obtain a truth assignment of the variables satisfying $\Phi$ as follows: Set every $x$ to true for which $G_x\cap V(S)$ is non-empty.
For every clause $c=(x\lor y)\in C$ for which both $G_x\cap V(S)$ and $G_y\cap V(S)$ are empty, set one of its variables to true. 
This assignment is clearly satisfying, it remains to show that we set at most $\vert S\vert\leq k$ variables to true. 
Consider a connected component $T$ of $H$. 
Recall that $T$ is a tree, and so its number of vertices is one more than its number of edges. 
We have shown that there is a vertex $v_c\in V(T)$, $c=(x\lor y)$, for which $G_x\cap V(S)\neq\varnothing$.
Thus, there are at most $\vert E(T)\vert$ vertices $v_c\in T$, $c=(x\lor y)$, for which both $G_x\cap V(S)$ and $G_y\cap V(S)$ are empty.
This implies that for every connected component $T$ of $H$ we set at most $\vert E(T)\vert$ variables to true.
Further, the number of variables $x\in X$ which we set to true because $G_x\cap V(S)\neq\varnothing$ is at most the number of edges of $S$ which are not contained in $G[K_C]$.
This shows that, in total, we set at most $\vert S\vert$ variables to true, which concludes the proof.
\end{proof}

Interestingly, \delindep{} and 1-\contracindep{}\break are equivalent on the instance $\Phi'$ constructed in the proof of \Cref{contchordal} and\break thus the same construction can be used to show \np-hardness of \delindep{} in chordal graphs.

\begin{theorem}
\label{delchordal}
\delindep{} is \np-complete in chordal graphs.
\end{theorem}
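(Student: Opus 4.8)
The plan is to reuse the very graph $G$ constructed in the proof of \Cref{contchordal} and to argue that, for this particular instance, deleting vertices achieves exactly what contracting edges did. Membership in \np\ is immediate: by \cite{gavril}, \mis\ is solvable in polynomial time on chordal graphs, and since an induced subgraph of a chordal graph is chordal, $G-U$ is chordal for every $U\subseteq V(G)$. Hence one can verify in polynomial time whether $\alpha(G-U)\leq\alpha(G)-1$. For hardness I would reduce from \wps: given $\Phi=(X,C,k)$, build the same chordal graph $G$ as in \Cref{contchordal}, recalling that $\alpha(G)=\vert X\vert+1$, and claim that $(G,k)$ is a \yes-instance of \delindep{} if and only if $\Phi$ is a \yes-instance of \wps.

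The core of the argument is a characterisation of which deletion sets are $\alpha$-deletion-critical. Any independent set of size $\vert X\vert+1$ must pick exactly one vertex from each clique $G_x$ and one vertex $v_c\in K_C$, say with $c=(x\lor y)$; since $v_c$ is complete to $K_x\cup K_y$, the only admissible choices from $G_x$ and $G_y$ are $v_x$ and $v_y$ themselves, while for every other variable $z$ any surviving vertex of $G_z$ works. Because $\vert G_z\vert=2k+2>\vert U\vert$ whenever $\vert U\vert\leq k$, every clique $G_z$ always retains a vertex. It follows that an independent set of size $\vert X\vert+1$ survives in $G-U$ precisely when there is a clause $c=(x\lor y)$ with $v_c,v_x,v_y\notin U$. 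Equivalently, $\alpha(G-U)\leq\vert X\vert$ if and only if $U$ meets $\set{v_c,v_x,v_y}$ for every clause $c=(x\lor y)$.

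For the forward direction, given a satisfying assignment with true set $X_p$, $\vert X_p\vert\leq k$, I would take $U=\set{v_x\colon x\in X_p}$. As $N_G(v_x)=K_x$ is a clique, $G-U$ is isomorphic to the graph $G/S$ from the proof of \Cref{contchordal}, which was shown there to satisfy $\alpha(G/S)<\alpha(G)$; hence $U$ is $\alpha$-deletion-critical. For the reverse direction, let $U$ be $\alpha$-deletion-critical with $\vert U\vert\leq k$, so by the characterisation $U$ meets every triple. Set to true each $x$ with $v_x\in U$, and for each clause $c=(x\lor y)$ not yet satisfied (forcing $v_c\in U$) set one of $x,y$ to true. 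This assignment satisfies $\Phi$, and the number of true variables is at most $\vert\set{x\colon v_x\in U}\vert+\vert\set{c\colon v_c\in U}\vert\leq\vert U\vert\leq k$, since variable-vertices and clause-vertices are disjoint subsets of $U$.

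The main obstacle I anticipate is establishing the characterisation cleanly, in particular the observation that deleting vertices inside the cliques $K_x$ is useless — they are too large to be emptied by $k$ deletions, so the only effective deletions are those of the ``variable'' vertices $v_x$ and the ``clause'' vertices $v_c$. Once this is in place, the final counting argument matches the threshold $k$ exactly, and the equivalence with the contraction instance of \Cref{contchordal} makes the forward direction essentially free.
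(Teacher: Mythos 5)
Your proposal is correct and follows essentially the same approach as the paper: the same reduction from \wps{} using the graph of \Cref{contchordal}, the same forward direction via the equivalence between contracting an edge at $v_x$ and deleting $v_x$, and the same reverse direction (your hitting-set characterisation of $\alpha$-deletion-critical sets via the triples $\set{v_c,v_x,v_y}$ is just a repackaging of the paper's contradiction argument, which exhibits exactly the independent set $\set{v_c,v_x,v_y}\cup\set{u_z\colon z\in X\setminus\set{x,y}}$, and your counting of true variables matches the paper's $Z\cup Z'$ argument).
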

\begin{proof}
It has been shown in \cite{gavril} that it is possible to determine the independence number of chordal graphs in polynomial time. 
Since chordal graphs are closed under vertex deletion, it is possible to check in polynomial time whether the deletion of a given set of vertices reduces the independence number. 
Hence \mbox{\delindep{}} is in \np \ for chordal graphs.

In order to show \np-hardness, we reduce from \wps. Let $\Phi$ be an instance of \wps,  $\Phi=(X, C,k)$. 
Let\break $\Phi'=(G,k)$ be the instance of 1-\contracindep{} which is described in \Cref{contchordal} and which has been shown to be equivalent to $\Phi$. 
Further, let $K_x, G_x$ and $v_x$ for each $x\in X$, $K_C$, and $v_c$ for each $c\in C$ be as in the proof of \Cref{contchordal}. 
Recall that we have shown that $\alpha(G)=\vert X\vert +1$ and that $G$ is chordal.

We show that $\Phi'$ is a \yes-instance of \delindep{} if and only if $\Phi$ is a \yes-instance of \wps.

Assume first that $\Phi$ is a \yes-instance of \wps \ and that $X_p$ is the set of positive variables in a satisfying assignment of $\Phi$. We have shown in the proof of \Cref{contchordal} that $\alpha(G- \set{v_x\colon x\in X_p})<\alpha(G)$, hence $(G,k)$ is a \yes-instance of \delindep{}.

Conversely, assume that $\Phi'$ is a \yes-instance of \delindep{} and let $W$ be an $\alpha$-deletion-critical set of vertices of cardinality at most $k$. 
For every $x\in X$ there is $u_x\in K_x\setminus W$, since $\vert W\vert <\vert K_x\vert$.
Define a set\break $Z=\set{x\in X\colon v_x\in W}$ and initialize a set $Z'=\varnothing$. 
For every clause $c\in C$ with $v_c \in W$ we choose one of the variables contained in $c$ and add it to $Z'$. 
We claim that setting the variables of $Z\cup Z'$ to true yields a satisfying assignment of $\Phi$. 
Observe first that $\vert Z\cup Z'\vert\leq\vert W\vert\leq k$ by construction. 
Suppose for a contradiction that there is a clause $c\in C$, $c=(x\lor y)$, such that neither $x$ nor $y$ is contained in $Z\cup Z'$. 
It follows that $v_{x},v_{y},v_c\notin W$. 
But then $\{v_c,v_{x},v_{y}\} \cup \set{u_z\colon\,z\in X\setminus \set{x,y}}$ is an independent set of size $|X|+1$ in $G-W$, a contradiction to the $\alpha$-deletion-criticalness of $W$. 
Hence the assignment is satisfying and the theorem follows.
\end{proof}

\begin{corollary}
\label{delperfect}
\delindep{} is \np-complete in perfect graphs.
\end{corollary}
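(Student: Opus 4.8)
The plan is to derive this corollary directly from \Cref{delchordal} by exploiting the classical containment of chordal graphs in the class of perfect graphs. First I would recall that every chordal graph is perfect; this is a standard structural fact. Consequently, the chordal graphs $G$ produced by the reduction in the proof of \Cref{delchordal} are themselves perfect graphs, so the \np-hardness established there transfers for free: if \delindep{} admitted a polynomial-time algorithm on perfect graphs, that algorithm would in particular solve \delindep{} in polynomial time on the chordal instances output by our reduction, contradicting \Cref{delchordal}. Hence \delindep{} is \np-hard on perfect graphs.

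For membership in \np, I would argue exactly as in the proof of \Cref{delchordal}, merely replacing the class. A certificate is a set $W\subseteq V(G)$ with $\vert W\vert\le k$, and to verify it we must check that $\alpha(G-W)<\alpha(G)$. Since perfect graphs are closed under vertex deletion (every induced subgraph of a perfect graph is again perfect), both $G$ and $G-W$ remain perfect. The independence number of a perfect graph is computable in polynomial time, so this verification runs in polynomial time and the problem lies in \np.

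The only step that relies on a nontrivial external ingredient is the polynomial-time computability of $\alpha$ on perfect graphs, which is needed solely for the \np-membership direction; the \np-hardness is essentially immediate from the inclusion of chordal graphs in perfect graphs. I therefore expect no genuine obstacle here: the corollary is a one-line consequence of \Cref{delchordal} together with these two standard facts about perfect graphs, namely that they contain all chordal graphs and that they admit polynomial-time computation of the independence number.
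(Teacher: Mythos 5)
Your proposal is correct and matches the paper's (implicit) reasoning exactly: the paper states this as an immediate corollary of \Cref{delchordal}, relying on the same two standard facts — chordal graphs are perfect (so the hard instances transfer), and $\alpha$ is polynomial-time computable on perfect graphs, which together with closure under vertex deletion gives \np-membership. Nothing is missing.
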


The last theorem in this section answers a question asked in \cite{CliqueLatest}. 
Indeed, \break \Cref{C3P1Hard} settles the missing case of \cite[Theorem 24]{CliqueLatest} and completes the complexity dichotomy for $H$-free graphs, which is as follows.

\begin{theorem}
Let $H$ be a graph. 
If $H$ is an induced subgraph of $P_4$ or of the paw, then \textsc{Contraction Blocker}$(\omega)$ is polynomial-time solvable for $H$-free graphs, otherwise it is NP-hard or co-NP-hard for $H$-free graphs.
\end{theorem}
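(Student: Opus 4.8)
The plan is to read this as a two-sided classification and to lean throughout on the monotonicity of induced-subgraph-free classes: if $H'$ is an induced subgraph of $H$, then every $H'$-free graph is $H$-free, so polynomial-time solvability propagates from a class to all of its subclasses, while hardness propagates from a class to all of its superclasses. First I would record that the family $\mathcal{P}$ of graphs that are induced subgraphs of $P_4$ or of the paw is closed under taking induced subgraphs and is finite (both $P_4$ and the paw have four vertices), hence is characterised by a finite set $\mathcal{F}$ of minimal forbidden induced subgraphs. The whole theorem then splits into two tasks: (i) polynomial-time solvability of \textsc{Contraction Blocker}$(\omega)$ on $P_4$-free and on paw-free graphs, which by monotonicity covers every $H\in\mathcal{P}$; and (ii) hardness on $F$-free graphs for every $F\in\mathcal{F}$, which by monotonicity covers every $H\notin\mathcal{P}$, since such an $H$ contains some $F\in\mathcal{F}$ as an induced subgraph and therefore $F$-free graphs form a subclass of $H$-free graphs.

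For the polynomial side I would invoke the algorithms of \cite{CliqueLatest}: on $P_4$-free graphs (cographs) and on paw-free graphs the problem is polynomial-time solvable, the paw-free case relying on the structural description of connected paw-free graphs as being triangle-free or complete multipartite. By the monotonicity above, any $H$ that is an induced subgraph of $P_4$ gives an $H$-free class contained in the cographs, and any $H$ that is an induced subgraph of the paw gives an $H$-free class contained in the paw-free graphs; in both cases the problem is polynomial.

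For the hard side I would first pin down $\mathcal{F}$ by a short case analysis (using that on at most three vertices $\mathcal{P}$ contains every graph except $3P_1$, that on four vertices it contains only $P_4$ and the paw, and a Ramsey-type argument bounding the obstructions at five vertices): it consists of the seven graphs $3P_1$, $2P_2$, $C_4$, $C_5$, $K_4$, the diamond $K_4-e$, and $C_3+P_1$. For each $F\in\mathcal{F}$ I then need hardness of \textsc{Contraction Blocker}$(\omega)$ on $F$-free graphs, and most of these are already supplied by \cite{CliqueLatest}: the NP-completeness on cobipartite graphs (which are $3P_1$-free) settles $F=3P_1$; the NP-completeness on split graphs (which are simultaneously $2P_2$-, $C_4$- and $C_5$-free) settles those three at once; and \cite{CliqueLatest} further disposes of the $K_4$-free and diamond-free cases, some of these yielding NP-hardness and some only co-NP-hardness, which is exactly why the statement is phrased as a disjunction. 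The single case left open in \cite[Theorem 24]{CliqueLatest}, namely $F=C_3+P_1$, is precisely what \Cref{C3P1Hard} provides. Combining (i) and (ii) then yields the dichotomy for every $H$.

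The main obstacle is concentrated entirely in the case $F=C_3+P_1$, that is, in \Cref{C3P1Hard}. The difficulty is that $(C_3+P_1)$-free graphs are a broad class—every vertex outside a triangle must have a neighbour in it—so a reduction may not create an induced triangle-plus-isolated-vertex anywhere, which severely constrains how separate gadgets can be placed; simultaneously one must keep tight control of the clique number and of how edge contractions merge vertices and can create new cliques. Verifying that the constructed instances are genuinely $(C_3+P_1)$-free while preserving equivalence with the source NP-hard (or co-NP-hard) problem is the crux, whereas the remaining work—checking completeness of $\mathcal{F}$ and the routine monotonicity transfers—is comparatively straightforward.
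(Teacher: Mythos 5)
Your proposal is correct and takes essentially the same route as the paper: the paper likewise obtains the dichotomy by combining the earlier classification of \cite{CliqueLatest} (Theorem 24 there, which already covers every $H$ other than $C_3+P_1$, including the polynomial cases for $P_4$-free and paw-free graphs and the hardness results you invoke for cobipartite, split, $K_4$-free and diamond-free graphs) with \Cref{C3P1Hard} for the single missing case $H=C_3+P_1$. Your explicit derivation of the minimal obstruction set $\set{3P_1,\,2P_2,\,C_4,\,C_5,\,K_4,\,K_4-e,\,C_3+P_1}$ merely unpacks the case analysis that the paper cites wholesale from \cite{CliqueLatest}, so the two arguments coincide in substance.
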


\begin{theorem}\label{C3P1Hard}
The decision problem \contracclique \ is NP-hard in $(C_3 + P_1)$-free graphs.
\end{theorem}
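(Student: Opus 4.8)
The plan is to prove \np-hardness by a polynomial reduction from \wps\ (equivalently \mvc), which we may assume is restricted to triangle-free graphs by \Cref{mvcNPcomp}; I would not try to place the problem in \np, in keeping with the statement claiming only \np-hardness (computing $\omega$ after contraction need not be easy, since $(C_3+P_1)$-freeness is not preserved under contraction). Given an instance $H$ with parameter $k$, I would build a $(C_3+P_1)$-free graph $G$ and an integer $k'$ so that the maximum cliques of $G$ encode the edges of $H$, and so that there is a set $S$ with $|S|\le k'$ and $\omega(G/S)<\omega(G)$ if and only if $H$ has a vertex cover of size at most $k$. The guiding principle is that one well-chosen contraction should simultaneously destroy \emph{all} maximum cliques associated with the edges incident to a single vertex of $H$; a family of such contractions then lowers $\omega$ precisely when the chosen vertices cover every edge of $H$.

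The engine of the equivalence is an elementary observation that I would isolate as a lemma: if $K$ is a clique of $G$ and no connected component of $G\big\vert_S$ contains two vertices of $K$, then the $|K|$ components meeting $K$ are pairwise at distance $1$ in $G$, hence pairwise adjacent in $G/S$, so $\omega(G/S)\ge|K|$. Consequently, $\omega(G/S)<\omega(G)$ \emph{forces} $S$ to merge two vertices inside every maximum clique of $G$. For the easy direction this tells me exactly what to contract, and to keep the contractions harmless I would only contract edges joining \emph{true twins} (vertices with the same closed neighbourhood): contracting such an edge merely deletes one endpoint and introduces no new adjacency, so it shrinks every maximum clique through the pair without ever creating a new clique. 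Thus a vertex cover of $H$ translates into a set $S$ of twin-edges with $\omega(G/S)=\omega(G)-1$ and $|S|\le k'$.

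The reverse direction, together with verifying $(C_3+P_1)$-freeness of $G$, is where the real difficulty lies, and I expect these two points to be the main obstacle. The $(C_3+P_1)$-free constraint is severe: as soon as $G$ contains a triangle, \emph{every} vertex must be adjacent to one of its vertices, so all triangles — and hence all large cliques — are forced to lie close together and $G$ is necessarily dense. This fights against the spread-out family of maximum cliques that a hardness reduction needs, so the construction must be arranged delicately, e.g.\ around a dominating clique to which the twin gadgets are attached so that no induced $C_3+P_1$ can arise. The second obstacle is \emph{regrowth}: an arbitrary $S$ (not built from twin-edges) might merge two vertices of a clique while the merged vertex gains new neighbours and recreates a clique of the old size. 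To exclude this I would, exactly as in the minimality argument yielding \Cref{forest}, pass to a minimal $\omega$-contraction-critical $S$, so that $G\big\vert_S$ is a forest, then analyse the few possible shapes of a tree component inside the dense core and show that each either already merges two true twins or can be rerouted to one without increasing $|S|$. Combined with the survival lemma, this lets me read off a vertex cover of $H$ of size at most $|S|\le k'$, completing the reduction; the $(C_3+P_1)$-freeness check and this rerouting/charging argument constitute the bulk of the technical work.
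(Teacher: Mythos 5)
Your high-level strategy matches the paper's: reduce from \mvc{} in $C_3$-free graphs (\Cref{mvcNPcomp}), exploit the fact that contracting an edge whose one endpoint dominates the other amounts to a vertex deletion, use the survival observation (components of $G\big\vert_S$ meeting a clique pairwise at distance $1$ yield a clique of the same size in $G/S$), and use the forest structure of a minimal critical set (\Cref{forest}) to charge vertices of a cover against edges of $S$. However, there is a genuine gap: you never exhibit the reduction graph. Everything you defer as ``the bulk of the technical work'' --- the concrete construction, its $(C_3+P_1)$-freeness, and the reverse direction --- is precisely the content of the proof, and your sketch of the reverse direction (classifying tree shapes ``inside the dense core'' and rerouting arbitrary contractions to twin edges) is not an argument one could carry out without knowing the graph. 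The paper's construction closes this gap with almost no machinery: given the $C_3$-free instance $G$, add a \emph{single universal vertex} $w$ and keep the same budget $k$. Then $(C_3+P_1)$-freeness is immediate (every triangle of $G'$ contains $w$, and $w$ is adjacent to everything); $\omega(G')=3$ and the maximum cliques are exactly the sets $\{w,u,v\}$ with $uv\in E(G)$, which is your ``maximum cliques encode edges'' desideratum; and a vertex cover $\{v_1,\dots,v_k\}$ yields the critical set $S=\{v_iw\}$ because contracting $v_iw$ is equivalent to deleting $v_i$.

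Two further points of divergence are worth noting. First, your insistence on contracting only \emph{true-twin} edges is both too strong and, in the paper's construction, false: $v_i$ and $w$ are not twins (their closed neighbourhoods differ); what makes the contraction harmless is the weaker condition $N_{G'}[v_i]\subseteq N_{G'}[w]$, which holds because $w$ is universal. If you tried to force genuine twins by duplicating vertices of $G$, the resulting triangles $\{u,u',v\}$ would not be dominated by far-away vertices and $(C_3+P_1)$-freeness would fail --- the very obstruction you identify but do not resolve. Second, no rerouting argument is needed in the reverse direction: from a minimal critical $S$ one directly builds a cover by taking, in each tree component $T$ of $G'\big\vert_S$, all vertices except one (except $w$ in $w$'s component), giving $|U|=\sum_T(|V(T)|-1)=\sum_T|E(T)|=|S|\leq k$; if some edge $uv$ were uncovered, the three distinct components containing $u$, $v$ and $w$ would be pairwise at distance $1$ and hence induce a triangle in $G'/S$, contradicting criticality. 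This is exactly your survival lemma, but applied directly, with no case analysis of tree shapes.
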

\begin{proof}
We use a reduction from \mvc \ in $C_3$-free graphs which is NP-complete due to \Cref{mvcNPcomp}. 
Let $(G,k)$ be an instance of \mvc \ where $G$ is a $C_3$-free graph. 
Since \mvc\ is trivial to solve on a graph without edges, we can assume that $E(G)$ is non-empty. 
We construct an instance $(G',k)$ of \contracclique \  such that $(G,k)$ is a \yes-instance of \mvc \ if and only if $(G',k)$ is a \yes-instance of \contracclique\ and $G'$ is $(C_3+P_1)$-free. 
Let $G'$ be a graph with $V(G') = V(G) \cup \set{w}$, $w\notin V(G)$, and $E(G') = E(G) \cup \{wv, v\in V(G)\}$. 
In other words, we add a universal vertex $w$ to $G$ in order to obtain $G'$.

Since $G$ is $C_3$-free, every copy of $C_3$ in $G'$ has to contain $w$. 
Furthermore, since $w$ is adjacent to every other vertex in $V(G')$, it follows that every vertex of $G'$ has distance at most one to every copy of $C_3$. 
Thus, $G'$ is $(C_3 + P_1)$-free. 
Also, note that $\omega(G') = 3$ and that every maximum clique in $G'$ is a copy of $C_3$ which contains $w$ and exactly two vertices of $V(G)$.

Let us assume that $(G,k)$ is a \yes-instance of \mvc. 
Let \break $\set{v_1,\dots, v_k}\subseteq V(G)$ be a vertex cover of $G$. 
Set $S = \set{v_iw\colon\,i \in \set{1,\dots, k}} $ and let $G^* = G'/S$. 
We claim that $S$ is $\omega$-contraction-critical. 
Notice that the contraction of an edge $vw \in S$ is equivalent to deleting the vertex $v$, since the new vertex remains adjacent to all other vertices. 
Thus, $G^*$ is isomorphic to $G-(V(S)\setminus\set{w})$. 
Since $\set{v_1,\dots, v_k}$ is a minimum vertex cover of $G$, there are no edges in $G^*-w$, meaning that $G^*$ is $C_3$-free and thus $\omega(G^*) \leq 2$. 
Hence $(G',k)$ is a \yes-instance of \contracclique.

For the other direction, assume that $(G',k)$ is a \yes-instance of \contracclique.
Let $S\subseteq E(G')$ be a minimum $\omega$-contraction-critical set of edges with $\vert S\vert\leq k$ and let $G^* = G'/S$.

We construct a set $U$ of vertices of $G$ as follows: For the connected component $T$ of $G'\big\vert_S$ that contains $w$, add every vertex of $V(T)$ except $w$ to $U$. 
For every other connected component $T$ of $G'\big\vert_S$ we add to $U$ all vertices of $V(T)$ except one, which can be chosen arbitrarily.
We claim that $U$ is a vertex cover of $G$ of size at most $k$.

To see that $\vert U\vert\leq k$, observe that for every connected component $T$ of $G'\big\vert_S$ we have added $\vert V(T)\vert -1$ vertices to $U$. 
Since $T$ is a tree (see \Cref{forest}), we have that $\vert V(T)\vert -1=\vert E(T)\vert$. 
Thus, we have added as many vertices to $U$ as there are edges in $S$ and hence $\vert U\vert =\vert S\vert\leq k$.

In order to show that $U$ is a vertex cover, suppose for a contradiction that there is an edge $uv\in E(G)$ for which neither $u$ nor $v$ is contained in $U$. 
Consider the connected components $A_u$, $A_v$ and $A_w$ of $G'\big\vert_S$ which contain $u$, $v$ and $w$, respectively. 
It follows from the construction of $U$ that in every connected component $T$ of $G'\big\vert_S$ there is at most one vertex of $T$ which is not contained in $U$. 
Hence, $A_u\neq A_v$. 
We have that $w\not\in U$ by construction, so the same argument can be used to show that $A_u\neq A_w$ and $A_v\neq A_w$. 
Thus, $A_u$, $A_v$, $A_w$ correspond to three different vertices in $G^*$ and since the components are pairwise at distance one, their corresponding vertices induce a $C_3$ in $G^*$, a contradiction to $S$ being $\omega$-contraction-critical.
Thus, $U$ is a vertex cover in $G$ and $(G,k)$ a \yes-instance of \mvc.
\end{proof}

\section{Algorithms}\label{algorithms}

In this section we give a polynomial-time algorithm for $d$-\textsc{Contraction\break Blocker}($\alpha$) in bipartite graphs.

\begin{theorem}
\label{maxnumedgecontraction}
Let $G$ be a connected, bipartite graph with $|V(G)| \geq 2d+2$ and $\alpha(G) \geq d+1$, where $d\geq 1$ is an integer. Then $(G,2d+1)$ is a \yes-instance of \dcontracindep.
\end{theorem}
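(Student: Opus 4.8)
The plan is to exhibit a single connected vertex set $W$ with $2d+1 \le |W| \le 2d+2$ whose contraction to one vertex already does the job; since a spanning tree $S$ of $G[W]$ has $|W|-1 \le 2d+1$ edges, this proves the claim. Let $z$ be the vertex of $G/S$ that arises from $W$. Splitting a maximum independent set of $G/S$ according to whether it contains $z$ gives the identity
\[
\alpha(G/S) = \max\bigl(\alpha(G-W),\; 1+\alpha(G-N_G[W])\bigr),
\]
so it suffices to bound both terms by $\alpha(G)-d$.

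The second term is easy to control. Since $G[W]$ is bipartite on at least $2d+1$ vertices, its larger side is an independent set of size at least $d+1$, hence $\alpha(G[W]) \ge d+1$. As an independent set of $G[W]$ and an independent set of $G-N_G[W]$ are non-adjacent, $\alpha(G) \ge \alpha(G[W]) + \alpha(G-N_G[W]) \ge (d+1) + \alpha(G-N_G[W])$, and therefore $1+\alpha(G-N_G[W]) \le \alpha(G)-d$ for \emph{every} connected $W$ with $|W|\ge 2d+1$. So the whole difficulty is to choose $W$ so that also $\alpha(G-W)\le \alpha(G)-d$.

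Here I would pass to matchings. By \Cref{Koenig} and \Cref{mvc+indset}, every bipartite $H$ satisfies $\alpha(H)=|V(H)|-\mu(H)$; applying this to $G$ and to the (bipartite) induced subgraph $G-W$ turns the target inequality into $\mu(G-W) \ge \mu(G)-(|W|-d)$. Fix a maximum matching $M$ of $G$. Deleting $W$ destroys only the edges of $M$ meeting $W$, so $\mu(G-W) \ge \mu(G) - |\{e\in M : e\cap W \ne \varnothing\}|$. The key idea is to build $W$ out of whole matching edges and $M$-unmatched vertices only, so that no edge of $M$ is merely ``half inside'' $W$. Then the number of $M$-edges meeting $W$ is exactly the number $q$ of whole matching edges it contains, and if $W$ consists of $q$ such edges together with $c$ unmatched vertices we have $|W|=2q+c$, so the matching loss is only $q \le |W|/2 \le d+1$, while $|W|-q = q+c \ge |W|/2 \ge d$. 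Both required inequalities then follow, and in particular $\alpha(G-W) \le \alpha(G)-(|W|-q) \le \alpha(G)-d$.

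To construct such a $W$ I would grow it greedily. Start from any edge of $M$ (which exists since $G$ is connected), and repeatedly enlarge the current set while keeping it connected, by either adding a whole edge of $M$ incident to it or, when no matched boundary vertex is available, a single $M$-unmatched boundary vertex. The invariant that matched vertices occur in $W$ only as whole pairs guarantees that, whenever a matched boundary vertex is adjacent to $W$, its partner lies outside $W$, so its edge can be added legally and such a step is always possible until $W$ is large. Because each step raises $|W|$ by $1$ or $2$ and $|V(G)|\ge 2d+2$, the first moment $|W|\ge 2d+1$ is reached with $|W|\in\{2d+1,2d+2\}$, exactly as required above. I expect this last construction to be the main obstacle: one has to check that a connecting step is always available and that the process lands in the window $\{2d+1,2d+2\}$ rather than overshooting — and it is precisely the two available increment sizes, $1$ for an unmatched vertex and $2$ for a matched pair, that make both parities reachable and keep $W$ inside the edge budget.
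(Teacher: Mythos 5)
Your proposal is correct and follows essentially the same route as the paper: your greedy construction of $W$ (grow from a matching edge, adding whole matched pairs or, failing that, unmatched boundary vertices, so that matched vertices only occur in pairs) is precisely the paper's \Cref{Algobuildtree}, and your K\H{o}nig-based bound on $\alpha(G-W)$ via the number of matching edges destroyed is the paper's central computation. The only difference is cosmetic: the case split $\alpha(G/S)=\max\bigl(\alpha(G-W),\,1+\alpha(G-N_G[W])\bigr)$ is more than needed, since your own counting already yields $|W|-q\geq\lceil |W|/2\rceil\geq d+1$, so the paper's simpler bound $\alpha(G/S)\leq\alpha(G-W)+1\leq\alpha(G)-d$ finishes the argument without the second branch.
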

\begin{proof}
Let $G$ be a bipartite graph with $|V(G)| \geq 2d+2$ and $\alpha(G) \geq d+1$. 
Let $M$ be a maximum matching of $G$. 
Since $G$ is connected, $M$ is non-empty. 
Consider the following algorithm which constructs a tree $T$, which is a subgraph of $G$.

\begin{algorithm}
\caption{}
\label{Algobuildtree}
\begin{algorithmic}[2]
\Require A bipartite graph $G$, a maximum matching $M$ in $G$, an integer $d\geq1$
\Ensure A tree $T$
\State Choose an arbitrary edge $uu'\in E(M)$.
\State Set  $V(T) = \{u,u'\}, E(T) = \{uu'\}$.
\While {$|E(T)| \leq2d-1$}
\State Choose two vertices $w\in N_G(T)\setminus V(T)$, and $w'\in N_G(w)\cap V(T)$. \label{algoline:choosev}
\If {$w\in V(M)$} 	
\State Let $v \in V(M)$ s.t. $ vw \in E(M)$.
\State $V(T) = V(T) \cup \{v,w\}$, $E(T) = E(T) \cup \{w'w,vw\}$ \label{algoline:chooseu}
\Else \ $V(T) = V(T) \cup \{w\}$, $E(T) = E(T) \cup \{w'w\}$
\EndIf
\EndWhile
\State \Return $T$
\end{algorithmic}
\end{algorithm}

We claim that the resulting graph $T$ is a tree. 
Indeed, the initial graph is a single edge and thus a tree. 
Further, observe that every time there are vertices and edges added to $T$ in lines 7 or 8, the resulting graph remains connected and the number of added vertices and added edges is the same. 
It follows that $T$ is connected and has exactly one more vertex than it has edges and is thus a tree.
It is easy to see that $T$ has $2d$ or $2d+1$ edges.

We consider the graph $G' = G-V(T)$.
For every $v \in V(M)\cap V(T)$ the unique vertex $u\in V(M)$ with $uv\in E(M)$ is also contained in $V(T)$ and $uv\in E(T)$.
Thus, there are at most $\Big \lfloor \frac{|V(T)|}{2}\Big \rfloor$ edges in $E(M)$ which have an endvertex in $T$.
Since $M - V(T)$ is a matching in $G'$ we have that $\mu(G') \geq \mu(G)-\Big \lfloor \frac{|V(T)|}{2}\Big \rfloor$.
Applying \Cref{Koenig} and \Cref{mvc+indset}, we get for the independence number of $G'$:
\begin{align*}
\alpha(G') = |V(G')| - \mu(G') &\leq |V(G)|-|V(T)|- \mu(G)+\bigg \lfloor \frac{|V(T)|}{2}\bigg \rfloor \\
&= \alpha(G) - \bigg \lceil \frac{|V(T)|}{2}\bigg \rceil = \alpha(G)-d-1.
\end{align*}
Let $G^*=G/E(T)$. 
Observe that $G\big\vert_{E(T)}$ contains exactly one connected component, say $A$, which has more than one vertex, namely the connected component corresponding to $T$.
Let $v^*\in V(G^*)$ be the vertex which corresponds to $A$.
Since $G^*-v^*$ is isomorphic to $G'$, we obtain that $\alpha(G^*) \leq \alpha(G')+1 \leq \alpha(G)-d$.
\end{proof}

\begin{algorithm}
\caption{}
\label{Algobip}
\begin{algorithmic}[2]

\Require A bipartite graph $G$, an integer $k$, a fixed integer $d$
\Ensure \textsc{Yes} if $(G,k)$ is a \yes-instance of \dcontracindep, \textsc{No} if not
\For {every $S \subseteq E(G)$ of size at most $k$}
	\State Let $\beta = 0$.
	\State Let $G'= G/S$.
	\State Let $ U=\set{v\in V(G'): v\textit{ corresponds to a connected component of }G\big\vert_S \right.$\par\hspace{150pt}  $\left.\textit{ which contains at least 2 vertices} }. $
	\For {every subset $U' \subseteq U$}
		\If {$U'$ is independent }
			\State $\beta = \max(\beta, \alpha(G'-(U\cup N_{G'}(U'))) + |U'|)$ 
		\EndIf
	\EndFor
	\If{$\beta \leq \alpha(G)-d$} 
	\State \Return \textsc{Yes} 
	\EndIf
\EndFor
\State \Return \textsc{No}
\end{algorithmic}
\end{algorithm}

\begin{theorem}
\dcontracindep \ is solvable in polynomial time in bipartite graphs.
\end{theorem}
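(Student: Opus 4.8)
The plan is to prove the theorem by combining Theorem~\ref{maxnumedgecontraction} with a bounded brute-force search that is realised by Algorithm~\ref{Algobip}. Since $d$ is a fixed constant, the strategy is to dispose of a trivial case and a ``large budget'' case first, so that the only remaining situation is when the number $k$ of permitted contractions is bounded by $2d$; in that regime the set of candidate edge sets $S$ is of polynomial size, and the main work is to evaluate $\alpha(G/S)$ quickly for each of them.

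First I would note that $\alpha(G/S)\geq 1$ for every $S$, so if $\alpha(G)\leq d$ the instance is a trivial \no-instance, and I may assume $\alpha(G)\geq d+1$. Next, if $|V(G)|\geq 2d+2$ and $k\geq 2d+1$, then Theorem~\ref{maxnumedgecontraction} applies directly (the graph is connected by the standing assumption), so $(G,2d+1)$, and hence $(G,k)$, is a \yes-instance and I return \yes. If $|V(G)|\leq 2d+1$, then $G$ has at most $\binom{2d+1}{2}=O(d^2)$ edges, so the whole search over subsets $S$ has constant size and can be performed directly. The one remaining case is $|V(G)|\geq 2d+2$ together with $k\leq 2d$, which I would handle with Algorithm~\ref{Algobip}: the number of edge sets $S$ with $|S|\leq k\leq 2d$ is $O(|E(G)|^{2d})$, which is polynomial because $d$ is fixed.

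The heart of the argument is to show that, for each fixed $S$, Algorithm~\ref{Algobip} computes $\alpha(G/S)$ in polynomial time even though $G/S$ need not be bipartite. The key structural fact is that the only problematic vertices of $G'=G/S$ lie in the set $U$ of vertices arising from contracted components of size at least two: these components are vertex-disjoint and each uses at least one edge of $S$, so $|U|\leq|S|\leq 2d$ is a constant, and $G'-U$ is isomorphic to the induced subgraph $G-V(S)$ of $G$ and is therefore bipartite. I would then verify that the quantity $\beta$ computed by the inner loop equals $\alpha(G')$. For any maximum independent set $I$ of $G'$, set $U'=I\cap U$; then $U'$ is independent and $I\setminus U'$ is an independent set of $G'-(U\cup N_{G'}(U'))$, which gives $\beta\geq|I|=\alpha(G')$. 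Conversely, for every independent $U'\subseteq U$, the union of $U'$ with a maximum independent set of $G'-(U\cup N_{G'}(U'))$ is independent in $G'$, so $\beta\leq\alpha(G')$. Since $|U|$ is constant, the inner loop ranges over only $2^{|U|}=O(1)$ subsets $U'$, and for each one $G'-(U\cup N_{G'}(U'))$ is an induced subgraph of the bipartite graph $G'-U$, whose independence number is computable in polynomial time via K\H{o}nig's theorem (Lemmas~\ref{Koenig} and~\ref{mvc+indset}).

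The main obstacle is exactly this evaluation of $\alpha(G/S)$: edge contractions can create odd cycles, so $G/S$ is in general not bipartite and the polynomial-time algorithm for the bipartite independence number cannot be applied as a black box. The resolution is the observation that all obstructions are confined to the constant-size set $U$ of contracted vertices, which can be dealt with by exhaustively enumerating its independent subsets, while the bipartite remainder is handled efficiently. Assembling the case analysis with the correctness and polynomial running time of Algorithm~\ref{Algobip} then yields that \dcontracindep{} is solvable in polynomial time in bipartite graphs.
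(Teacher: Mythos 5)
Your proposal is correct and follows essentially the same route as the paper: the same case analysis (trivial \no{}-instance when $\alpha(G)\leq d$, constant-size brute force when $|V(G)|\leq 2d+1$, \yes{} via Theorem~\ref{maxnumedgecontraction} when $k\geq 2d+1$, and Algorithm~\ref{Algobip} for $k\leq 2d$), with the same key observation that the non-bipartite part of $G/S$ is confined to the constant-size set $U$ of contracted vertices. Your explicit two-sided verification that $\beta=\alpha(G/S)$ merely spells out what the paper states more briefly.
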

\begin{proof}
Let $G$ be a bipartite graph and $k$ a positive integer. 
If $|V(G)| \leq 2d+1$ there are at most $2^{d(d+1)}$ subsets of $E(G)$ and at most $2^{2d+1}$ subsets of $V(G)$. 
We can check for every subset $S\subseteq E(G)$ if $\alpha(G/S)\leq\alpha(G)-d$ in constant time by computing the graph $G/S$ and checking for each subset of $V(G/S)$ if it is independent. 
Thus, we can check in constant time if $G$ is a \yes-instance for \dcontracindep.

Since contracting edges in a non-empty graph cannot reduce the number of vertices to zero, it follows that if $\alpha(G) \leq d$ it is not possible to reduce $\alpha(G)$ by $d$ via edge-contractions. 
Hence, we can assume that $|V(G)| \geq 2d+2$ and $\alpha(G) \geq d+1$. 
By \Cref{maxnumedgecontraction}, we know that for $k\geq 2d+1$, it is always possible to contract at most $k$ edges to reduce the independence number of $G$ by at least d, so we can further assume that $k\leq 2d$. 


Consider now \Cref{Algobip} which takes as input $G,k$ and $d$ and outputs $\yes{}$ or $\no{}$.
\Cref{Algobip} considers every subset $S\subseteq E(G)$ of edges of cardinality at most $k$ and computes $\alpha(G/S)$. 
If there is some $S$ such that $\alpha(G/S)\leq\alpha(G)-d$ then we return $\yes$, and $\no$ otherwise. 
In order to compute $\alpha(G/S)$ for such a subset $S$ of edges, we first set $G'=G/S$ and consider the set of vertices $U\subseteq V(G')$ which have been formed by contracting some edges in $S$ (see line 4 of the algorithm). 
Observe that $G[V(G')\setminus U]$ is isomorphic to $G-V(S)$ and induces thus a bipartite graph. 
Every independent set of $G'$ can be partitioned into a set $U'\subseteq U$ and a set $W\subseteq V(G')\setminus (U\cup N_{G'}(U'))$. 
Thus, we can find the independence number of $G'$ by considering every independent subset $U'$ of $U$ and computing $\alpha(G'- (U\cup N_{G'}(U')))+\vert U'\vert$. 
The largest of these values is then $\alpha(G')$. 
The independence number of the bipartite graph $G'-(U\cup N_{G'}(U'))$ can be computed in polynomial time, see \Cref{mvc+indset} and \cite{ahuja}.

The number of subsets of $E(G)$ of cardinality at most $k$ is in $O(\vert E(G)\vert^k)=O(\vert V(G)\vert^{4d})$.
For any such subset $S$, the number of subsets $U'\subseteq U$ is at most $2^k\leq 2^{2d}$.
Thus, the running time of \Cref{Algobip} is polynomial.
\end{proof}
%
%
\bibliographystyle{splncs04}
\bibliography{ref}

\end{document}